\newcommand{\Rb}{{\mathbb R}}
\newcommand{\1}{\mathbf{1}}
\def\ip<#1,#2>{\langle #1, #2 \rangle}
\newtheorem{thm}{Theorem}[section]
\newtheorem*{thm*}{Theorem}
\newtheorem{cor}[thm]{Corollary}
\newtheorem*{cor*}{Corollary}
\newtheorem{lemma}[thm]{Lemma}
\newtheorem*{con*}{Conjecture}
\newtheorem*{prob*}{Problem}
\theoremstyle{definition}
\newtheorem{defn}[thm]{Definition}
\theoremstyle{remark}
\newtheorem{rem}[thm]{Remark}
\begin{document}
\title{Distance matrices of subsets of the Hamming cube}
\author[Ian Doust]{Ian Doust}
\email{i.doust@unsw.edu.au}
\address{School of Mathematics and Statistics, University of New South Wales, Sydney, New South Wales 2052, Australia}
\author[Gavin Robertson]{Gavin Robertson}
\email{gavin.robertson@unsw.edu.au}
\address{School of Mathematics and Statistics, University of New South Wales, Sydney, New South Wales 2052, Australia}
\author[Alan Stoneham]{Alan Stoneham}
\email{a.stoneham@unsw.edu.au}
\address{School of Mathematics and Statistics, University of New South Wales, Sydney, New South Wales 2052, Australia}
\author[Anthony Weston]{Anthony Weston}
\email{anthony.weston@qatar.tamu.edu}
\address{Texas A\&M University at Qatar, Science Program, PO Box 23874, Education City, Doha, Qatar}
\address{Department of Decision Sciences, University of South Africa, PO Box 392, UNISA 0003, South Africa}

\subjclass[2010]{05C50, 15A15, 46B85}

\keywords{Distance matrix, Determinant, Hamming cube, Negative type}

\begin{abstract}
Graham and Winkler derived a formula for the determinant of the distance matrix
of a full-dimensional set of $n + 1$ points $\{ x_{0}, x_{1}, \ldots , x_{n} \}$
in the Hamming cube $H_{n} = ( \{ 0,1 \}^{n}, \ell_{1} )$. In this article we
derive a formula for the determinant of the distance matrix $D$ of an arbitrary set
of $m + 1$ points $\{ x_{0}, x_{1}, \ldots , x_{m} \}$ in $H_{n}$. It follows
from this more general formula that $\det (D) \not= 0$ if and only if the vectors
$x_{0}, x_{1}, \ldots , x_{m}$ are affinely independent. Specializing to the case
$m = n$ provides new insights into the original formula of Graham and Winkler.
A significant difference that arises between the cases $m < n$ and $m = n$ is noted.
We also show that if $D$ is the distance matrix of an unweighted tree
on $n + 1$ vertices, then $\ip< D^{-1} \1,\1> = 2/n$ where $\1$
is the column vector all of whose coordinates are $1$. Finally, we derive
a new proof of Murugan's classification of the subsets of $H_{n}$ that
have strict $1$-negative type.
\end{abstract}
\maketitle

\section{Introduction: Distance and Gram matrices}\label{Sec 1}
The global geometry of a finite metric space $(\{x_{0}, x_{1}, \ldots , x_{m} \}, d)$
is completely encoded within its \textit{distance matrix} $D = (d(x_{i}, x_{j}))_{i,j = 0}^{m}$.
The distance matrices we focus on in this article correspond to metric subspaces of the
\textit{Hamming cube} $H_{n} = (  \{ 0,1 \}^{n}, \ell_{1} )$. In this context,
the metric distance $d$ between two vectors $x, y \in \{ 0,1 \}^{n}$ is given by
$d( x,y ) = \| x - y \|_{1}$. Associated with such a distance matrix $D$
is a Gram matrix $G = G(D)$ that will be described in Section \ref{Sec 2}.
It is, however, helpful at this point to recall some general properties
of Gram matrices.

The \textit{Gram matrix} of a set of vectors $\{ x_{1}, \ldots , x_{m} \} \subset
\mathbb{R}^{n}$ is the $m \times m$ matrix
\begin{align*}
G(x_{1}, \ldots , x_{m}) & = (x_{i} \cdot x_{j})_{i,j = 1}^{m} \\
                         & = BB^{T},
\end{align*}
where $B$ is the $m \times n$ matrix whose $i$th row is given by the vector
$x_{i}$, $1 \leq i \leq m$. One may use the Gram matrix $G(x_{1}, \ldots , x_{m})$
to test for linear dependence. Indeed, the set of vectors $\{ x_{1}, \ldots , x_{m} \}$
is linearly dependent if and only if $\det G(x_{1}, \ldots , x_{m}) = 0$. This
result is known as Gram's criterion for linear dependence. All Gram matrices are
positive semi-definite. Moreover, $G(x_{1}, \ldots , x_{m})$ is positive definite
if and only if the set of vectors $\{ x_{1}, \ldots , x_{m} \}$ is linearly
independent. For a classical treatment of these results, see Gantmacher \cite{FG}.

Gram matrices also arise naturally when calculating the volumes of $m$-dimensional
parallelepipeds in $\mathbb{R}^{n}$. Given linearly independent vectors $x_{1}, x_{2}, \ldots, x_{m} \in \mathbb{R}^{n}$,
the $m$-dimensional parallelepiped with sides $x_{1}, x_{2}, \ldots, x_{m}$ is, by definition,
the set
\begin{align*}
P = \{ t_{1}x_{1} + t_{2}x_{2} + \cdots + t_{m}x_{m} \, | \, t_{k} \in [0,1], 1 \leq k \leq m \}.
\end{align*}
For any given set of vectors $\{ x_{1}, \ldots , x_{m} \} \subset \mathbb{R}^{n}$,
the $m$-dimensional volume $V$ of the parallelepiped $P$ with sides $x_{1}, \ldots , x_{m}$ satisfies
$V^{2} = \det G(x_{1}, \ldots , x_{m})$. See Courant and John \cite{CJ} for a
comprehensive treatment of volumes of parallelepideds.

A set of points $S$ in the Hamming cube $H_{n}$
is said to be \textit{full-dimensional} if the convex hull of $S$ has positive
$n$-dimensional volume. Notably, a set $S$ of $n + 1$ points in $H_{n}$ is
full-dimensional if and only if $S$ is an affinely independent subset of $\mathbb{R}^{n}$.

As an example, it is well-known that every $n + 1$ point metric tree $T$ endowed
with the usual graph metric $\rho$ embeds isometrically into $H_{n}$. So it
follows from results of Hjorth et al.\ \cite{Hj1} and Murugan \cite{MM} that
the embedded vertices of $T$ form a full-dimensional subset of $H_{n}$. The
determinant of the distance matrix $D$ of any such metric tree $(T, \rho)$ is
given by $\det (D) = (-1)^{n}n2^{n - 1}$ and hence does not depend upon the
geometry of the particular tree $T$. This remarkable formula is due to Graham
and Pollak \cite{GP}. Graham and Winkler \cite{GW, GW2} generalized this tree result
by calculating the determinant of the distance matrix $D$ of any full-dimensional
set of $n+1$ points $\{ x_{0}, x_{1}, \ldots , x_{n} \}$ in $H_{n}$. They showed that
\begin{align}\label{twinkle}
\det (D) & = (-1)^{n}n2^{n-1} \det G(x_{1} - x_{0}, \ldots , x_{n} - x_{0}) \nonumber \\
         & = (-1)^{n}n2^{n-1} V^{2},
\end{align}
where $V$ is the volume of the parallelepiped with sides
$x_{1} - x_{0}, \ldots , x_{n} - x_{0}$.

In this article we calculate the determinant of the distance matrix $D$ of an
arbitrary set $\{ x_{0}, x_{1}, \ldots , x_{m} \} \subseteq H_{n}$, $m \geq 1$. These calculations are implemented in Section \ref{Sec 2}.
The resulting formulas are stated in Lemma \ref{Lemma 1}, Theorem \ref{Thm 1} and Theorem
\ref{Thm 2}. It follows that $\det (D) \not= 0$ if and only if the set of vectors
$\{ x_{0}, x_{1}, \ldots , x_{m} \}$ is affinely independent.
In the case $m = n$ we show how to reduce to the aforementioned result (\ref{twinkle})
of Graham and Winkler \cite{GW, GW2}. In Remark \ref{Rem 1}, we point out that there is
a significant difference between the cases $m < n$ and $m = n$.

In Section \ref{Sec 3} we use the formulas from Section \ref{Sec 2} and a theorem of S\'{a}nchez \cite{SS} to
provide a new proof of Murugan's \cite{MM} classification of the subsets of $H_{n}$ that have strict $1$-negative type.
Given the distance matrix $D$ of an affinely independent set $ \{ x_{0}, x_{1}, \ldots , x_{m} \} \subset H_{n}$,
it becomes necessary to calculate a formula for the inner product $\langle D^{-1}\1,\1\rangle$, where
$\1$ is the column $(m+1)$-vector all of whose coordinates are $1$. This is done in Corollary \ref{Cor M}.

In Section \ref{Sec 4} we consider the case of an embedded $n+1$ point unweighted metric tree in $H_{n}$
and show that $\langle D^{-1}\1,\1\rangle = 2/n$. We conjecture that this quantity is, in fact, minimal
over all affinely independent subsets $\{ x_{0}, x_{1}, \ldots , x_{m} \}$ of $H_{n}$ and provide some numerical
evidence.

Throughout we assume that $n \geq 2$ is a fixed integer and let $H_{n}$ denote the $n$-dimensional hypercube
$\{ 0,1 \}^{n}$ endowed with the $\ell_{1}$-metric.

\section{Distance matrices of subsets of the Hamming cube}\label{Sec 2}

The hypercube $\{0,1\}^n$ has a natural additive group structure given by elementwise addition modulo 2.
It is easy to check that the $\ell_1$ metric is invariant under translation in this group. In particular,
if $X = \{x_0,x_1,\dots,x_m\}$ is a given subset of $H_{n}$, then the map $\Phi(x) = x-x_0$ is an isometric
isomorphism from $X$ to the set $X' = \{{\bf 0}, x_1-x_0,\dots,x_m-x_0\}$ (where $\mathbf{0}$ denotes
the zero vector in $\{ 0,1 \}^{n}$). Note that
if we set $x_i' = x_i - x_0$, $0 \le i \le m$, then obviously $G(x_1-x_0,\dots,x_m-x_0) = G(x_1' - x_0',\dots,x_m' - x_0')$
and consequently the $m$-dimensional parallelepided with edges $x_{1} - x_{0}, \ldots, x_{m} - x_{0}$ has
the same volume as the $m$-dimensional parallelepided with edges $x_{1}^{\prime}, \ldots, x_{m}^{\prime}$.

Consequently, when considering a set $\{ x_{0}, x_{1}, \ldots, x_{m} \}\subseteq H_n$
we may assume that $x_{0} = \mathbf{0}$ without altering the distance matrix $D$ and without
altering the volume of the $m$-dimensional parallelepided with edges $x_{1} - x_{0}, \ldots, x_{m} - x_{0}$.

Henceforth, given a set $\{ x_{0}, x_{1}, \ldots, x_{m} \} \subseteq H_{n}$,
we will assume that $x_{0} = \mathbf{0}$ unless stated otherwise.
Throughout the distance matrix of $\{ x_{0}, x_{1}, \ldots, x_{m} \}$ will be denoted by
$D = (d(x_{i}, x_{j}))_{i,j = 0}^{m} = ( \| x_{i} - x_{j} \|_{1})_{i,j = 0}^{m}$. We associate with $D$ the
$m \times n$ matrix $B$ whose $i$th row is given by $x_{i}$, $1 \leq i \leq m$. So if
$x_{i} = (x_{i,1}, x_{i,2}, \ldots, x_{i,n})$, then $B = (x_{i,j})_{i = 1,j = 1}^{m,n}$.
The matrix product $BB^{T}$ is the $m \times m$ Gram matrix $G = G(x_{1}, \ldots, x_{m}) =
(x_{i} \cdot x_{j})_{i,j = 1}^{m}$. It it also useful to let $u\in\mathbb{R}^{m}$ denote the column
vector $u=(x_{1} \cdot x_{1}, x_{2} \cdot x_{2}, \ldots, x_{m} \cdot x_{m})^T$. Finally, we will use
$\langle\cdot,\cdot\rangle$ to denote the standard inner product on $\Rb^{n}$ when dealing with
quadratic forms such as $(G^{-1}u)\cdot u=u^{T}G^{-1}u=\langle G^{-1}u,u\rangle$. With these notational
preliminaries in mind it is instructive to compare $\det (D)$ to $\det (G)$.

\begin{lemma}\label{Lemma 1}
Let $\{ x_{0}, x_{1}, \ldots, x_{m} \}$, $m \geq 1$, be a subset of the Hamming
cube $H_{n}$. Then
\[
\det(D) = (-1)^{m-1}2^{m-1} \det \begin{pmatrix} 0 & u^{T} \\ u & G \end{pmatrix}.
\]
\end{lemma}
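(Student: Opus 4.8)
The plan is to use the fact that every coordinate of each $x_i$ lies in $\{0,1\}$, which lets me rewrite every entry of $D$ explicitly in terms of $G$ and $u$. Since we have normalized so that $x_0 = \mathbf{0}$, the entries in the zeroth row and column record $\|x_i - x_0\|_1 = \|x_i\|_1 = \sum_k x_{i,k} = x_i \cdot x_i = u_i$, using $x_{i,k}^2 = x_{i,k}$ on $\{0,1\}$. For $i,j \geq 1$, the elementary identity $|a-b| = a + b - 2ab$ valid for $a,b \in \{0,1\}$ gives $\|x_i - x_j\|_1 = \sum_k (x_{i,k} + x_{j,k} - 2x_{i,k}x_{j,k}) = u_i + u_j - 2(x_i \cdot x_j)$. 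Writing $\mathbf{1}$ for the all-ones column vector in $\mathbb{R}^m$ and $G = (x_i \cdot x_j)_{i,j=1}^m$, this means
\[
D = \begin{pmatrix} 0 & u^{T} \\ u & u\mathbf{1}^{T} + \mathbf{1}u^{T} - 2G \end{pmatrix}.
\]

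Next I would strip off the two rank-one corrections $u\mathbf{1}^{T}$ and $\mathbf{1}u^{T}$ by elementary row and column operations, which do not change the determinant. Subtracting the zeroth column from each of columns $1,\dots,m$ removes the $u\mathbf{1}^{T}$ term: in row $i \geq 1$ the $(i,j)$ entry becomes $u_j - 2G_{ij}$, while the zeroth row is untouched because $D_{00} = 0$. This produces $\begin{pmatrix} 0 & u^{T} \\ u & \mathbf{1}u^{T} - 2G \end{pmatrix}$. Subtracting the zeroth row from each of rows $1,\dots,m$ then removes the remaining $\mathbf{1}u^{T}$ term and leaves the zeroth column fixed, so that
\[
\det(D) = \det \begin{pmatrix} 0 & u^{T} \\ u & -2G \end{pmatrix}.
\]

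Finally I would recover the stated constant by pulling scalars out of the determinant via multilinearity. Each of the last $m$ columns is $-2$ times the corresponding column of $\begin{pmatrix} 0 & u^{T} \\ u & G \end{pmatrix}$, provided the top entry is simultaneously rescaled; factoring $-2$ from each of columns $1,\dots,m$ therefore yields $(-2)^{m}\det\begin{pmatrix} 0 & -\tfrac{1}{2}u^{T} \\ u & G \end{pmatrix}$, and factoring $-\tfrac{1}{2}$ from the zeroth row restores $u^{T}$. Since $(-2)^{m}\cdot(-\tfrac{1}{2}) = (-1)^{m-1}2^{m-1}$, this gives the claimed identity. The computation is essentially bookkeeping, and the only step demanding genuine care is this last one: factoring $-2$ out of the $m$ columns unavoidably distorts the $u^{T}$ block in the zeroth row, and one must compensate with exactly one factor of $-\tfrac{1}{2}$ from that row in order to land on the correct power $2^{m-1}$ and the correct sign $(-1)^{m-1}$.
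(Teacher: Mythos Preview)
Your proof is correct and follows essentially the same approach as the paper: the same coordinate identity $|a-b|=a+b-2ab$ for $a,b\in\{0,1\}$, the same row/column subtractions to reduce $D$ to $\begin{pmatrix} 0 & u^{T} \\ u & -2G \end{pmatrix}$, and the same scalar-factoring argument to extract $(-1)^{m-1}2^{m-1}$. The only cosmetic differences are that you perform the column operations before the row operations and that you handle the final factoring by pulling $-2$ from the last $m$ columns and then $-\tfrac12$ from the zeroth row, whereas the paper phrases it as factoring $-2$ from every entry and then ``replacing'' a factor for the first row and first column; the arithmetic is identical.
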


\begin{proof}
The following simple identity will be used. For $x, y \in H_{n}$,
\begin{align}\label{eqn1}
d(x, y)   = (x - y) \cdot (x - y)  
          = (x \cdot x) + (y \cdot y) - 2(x \cdot y).
\end{align}
Let $R_{i}$ and $C_{j}$ denote the $i$-th row and $j$-th column of $D$ respectively.
Consider the matrix $D^{\prime}$ that is obtained by applying the following elementary
row and column operations to $D$. For $2 \leq i,j \leq m+1$ replace $R_{i}$ by $R_{i}-R_{1}$
and then replace $C_{j}$ by $C_{j}-C_{1}$. Using (\ref{eqn1}) we see that
\[D^{\prime}=\begin{pmatrix} 0 & u^{T} \\ u & -2G \end{pmatrix}\]
and so
\[\det(D) = \det \begin{pmatrix} 0 & u^{T} \\ u & -2G \end{pmatrix}.\]
The result now follows by factorizing $-2$ from each of the entries of $D^{\prime}$ and
replacing such a factor for the first row and the first column.
\end{proof}

\begin{thm}\label{Thm 1}
Let $\{ x_{0}, x_{1}, \ldots, x_{m} \}$, $m \geq 1$, be a subset of the Hamming
cube $H_{n}$. If the set of vectors $\{ x_{1}, x_{2}, \dots, x_{m} \}$ is linearly dependent then $\det(D) = 0$.
Consequently,
\[
\det \begin{pmatrix} 0 & u^{T} \\ u & G \end{pmatrix} = 0.
\]
\end{thm}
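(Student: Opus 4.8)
The plan is to exhibit an explicit nonzero vector in the kernel of the bordered matrix $M = \begin{pmatrix} 0 & u^{T} \\ u & G \end{pmatrix}$; this forces $\det(M) = 0$, and then Lemma \ref{Lemma 1} (whose prefactor $(-1)^{m-1}2^{m-1}$ is nonzero) yields $\det(D) = 0$ as well. Since the two displayed assertions are equivalent via Lemma \ref{Lemma 1}, it suffices to treat either one.

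First I would record the structural fact about the Hamming cube that makes the border $u$ interact correctly with the hypothesis. Because every coordinate of $x_{i} \in \{0,1\}^{n}$ equals its own square, $x_{i} \cdot x_{i} = \sum_{j} x_{i,j}^{2} = \sum_{j} x_{i,j} = \| x_{i} \|_{1}$; equivalently, $u = B\mathbf{1}_{n}$, where $\mathbf{1}_{n}$ is the all-ones column vector in $\mathbb{R}^{n}$ and $B$ is the matrix whose rows are $x_{1}, \ldots, x_{m}$.

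Next I would invoke the hypothesis. Linear dependence of $\{ x_{1}, \ldots, x_{m} \}$ provides a nonzero vector $c = (c_{1}, \ldots, c_{m})^{T}$ with $\sum_{i} c_{i} x_{i} = 0$, that is, $B^{T} c = 0$. Setting $w = (0, c_{1}, \ldots, c_{m})^{T} \in \mathbb{R}^{m+1}$ and computing $Mw$, the lower block is $Gc = B(B^{T}c) = 0$ while the top entry is $u^{T} c = \mathbf{1}_{n}^{T} B^{T} c = 0$. Hence $Mw = 0$ with $w \neq 0$, so $\det(M) = 0$ and therefore $\det(D) = 0$.

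The point needing care is the top-entry computation $u^{T} c = 0$: linear dependence by itself yields only $\det(G) = 0$ (since $G = BB^{T}$ drops rank), which is insufficient to conclude $\det(M) = 0$. What makes the argument work is the identity $u = B\mathbf{1}_{n}$, a feature special to the $\{0,1\}$-valued coordinates of $H_{n}$, which lets the single dependence vector $c$ annihilate the $u$-border and the Gram block simultaneously. This is the one place where the Hamming-cube hypothesis is genuinely used, and I would highlight it accordingly.
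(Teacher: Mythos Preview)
Your argument is correct. Both you and the paper exhibit an explicit nonzero kernel vector built from the dependence coefficients $c_{1},\dots,c_{m}$, but you target different matrices. The paper works directly with $D$: setting $c_{0}=-\sum_{j}c_{j}$, it computes $(Dc)_{i}$ entry by entry using the Hamming identity $|x_{i,k}-x_{j,k}|=(x_{i,k}-x_{j,k})^{2}=x_{i,k}-2x_{i,k}x_{j,k}+x_{j,k}$, and only afterwards invokes Lemma~\ref{Lemma 1} to deduce that the bordered determinant vanishes. You instead attack $M$ directly with $w=(0,c)^{T}$, using the equivalent Hamming-specific identity $u=B\mathbf{1}_{n}$ to kill the border term $u^{T}c$ in one line, and then pull the conclusion back to $D$ via Lemma~\ref{Lemma 1}. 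Your route is shorter and makes the role of the $\{0,1\}$ hypothesis more transparent (it enters exactly once, as $u=B\mathbf{1}_{n}$); the paper's route has the mild advantage of producing a kernel vector for $D$ itself without passing through Lemma~\ref{Lemma 1}, which could be of independent interest.
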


\begin{proof}
Suppose that  $\{ x_{1}, x_{2}, \dots, x_{m} \}$ is linearly dependent subset of $H_{n}$.
Then there exist scalars $c_{1}, \dots ,c_{m}\in\mathbb{R}$, not all zero, such that $\sum_{j=1}^{m} c_{j}x_{j} = \mathbf{0}$.
Now set $c_{0} =- \sum_{j=1}^{m} c_{j}$ and $c =(c_{0}, c_{1}, \ldots,c_{m})^T$. Then for each $i = 0, \ldots ,n$,
\begin{align*}
(Dc)_{i}        & =  \sum_{j=0}^{m} d(x_{i}, x_{j})c_{j} \\
                & =  d(x_{i}, x_{0})c_{0} + \sum_{j=1}^{m} \left( \sum_{k=1}^n|x_{i,k} - x_{j,k}| \right)c_{j}.
\end{align*}
Note that since $x_{i,k}$, $x_{j,k}$ and $|x_{i,k} - x_{j,k}|$ are all either $0$ or $1$, we have that
\begin{align*}
|x_{i,k} - x_{j,k}| & = (x_{i,k} - x_{j,k})^{2} = x_{i,k} - 2x_{i,k}x_{j,k} + x_{j,k}.
\end{align*}
Recalling that $\sum_{j=1}^{m} c_{j}x_{j} = \mathbf{0}$ and swapping the order of summation yields
\begin{align*}
(Dc)_{i}        & = c_{0} \sum_{k=1}^{n} x_{i,k} +
                      \sum_{k=1}^{n} \left( \sum_{j=1}^{m} \left(x_{i,k} - 2x_{i,k}x_{j,k} + x_{j,k}\right)c_{j} \right) \\
                & = c_{0} \sum_{k=1}^{n} x_{i,k} + \sum_{k=1}^{n}\sum_{j=1}^{m}x_{i,k}c_{j} + \sum_{k=1}^{n}(1-2x_{i,k})\bigg(\sum_{j=1}^{m}c_{j}x_{j}\bigg)_{k}\\
                & = c_{0} \sum_{k=1}^{n} x_{i,k} + \left(\sum_{j=1}^{m} c_{j}\right)\sum_{k=1}^{n}x_{i,k}\\
                & = (c_{0} - c_{0})\sum_{k=1}^{n} x_{i,k} \\
                & = 0.
\end{align*}
Hence $c$ is a nonzero element of the kernel of $D$, and so we conclude that $\det (D)=0$. The fact that
\[
\det \begin{pmatrix} 0 & u^{T} \\ u & G \end{pmatrix} = 0
\]
now follows immediately from this and Lemma \ref{Lemma 1}.
\end{proof}

\begin{lemma}\label{Lemma 2}
Suppose that $W$, $X$, $Y$ and $Z$ are matrices of sizes $j \times j$, $j \times k$, $k \times j$
and $k \times k$ (respectively) and that $Z$ is invertible. Then
\[\det\begin{pmatrix} W & X \\ Y & Z \end{pmatrix}=\det(Z)\det(W-XZ^{-1}Y).\]
\end{lemma}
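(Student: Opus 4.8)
The plan is to prove the identity by exhibiting an explicit block factorization of the matrix and then invoking the multiplicativity of the determinant. Since $Z$ is assumed to be invertible, I can use $Z^{-1}$ to ``clear'' the off-diagonal block $X$ via block column operations; the Schur complement $W - XZ^{-1}Y$ will appear precisely as the resulting top-left block. This is exactly the quantity the lemma predicts, which signals that the factorization is the right one to look for.

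Concretely, the first step is to verify the factorization
\[
\begin{pmatrix} W & X \\ Y & Z \end{pmatrix}
=
\begin{pmatrix} I_{j} & XZ^{-1} \\ 0 & I_{k} \end{pmatrix}
\begin{pmatrix} W - XZ^{-1}Y & 0 \\ Y & Z \end{pmatrix}
\]
by direct block multiplication, confirming that all four blocks on the right reproduce the original matrix (the top-left block, for instance, becomes $(W-XZ^{-1}Y) + XZ^{-1}Y = W$). This rewrites the matrix as a product of a block upper-triangular factor with identity diagonal blocks and a block lower-triangular factor.

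The second step is to take determinants. Applying $\det(AB) = \det(A)\det(B)$ together with the elementary fact that the determinant of a block-triangular matrix is the product of the determinants of its diagonal blocks, the first factor contributes $\det(I_{j})\det(I_{k}) = 1$ and the second contributes $\det(W - XZ^{-1}Y)\det(Z)$, giving the claimed formula. I do not anticipate a genuine obstacle here: the result is classical, and the only ingredient beyond routine block arithmetic is the block-triangular determinant formula, which may be taken as standard or justified by a short Leibniz/cofactor expansion. The main point requiring care is simply guessing the correct factorization, but this is forced by the need for $Z^{-1}$ to annihilate the $X$ block.
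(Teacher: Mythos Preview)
Your proof is correct and follows essentially the same approach as the paper: both exhibit an explicit block-triangular factorization and take determinants. The paper uses the slightly different factorization $\begin{pmatrix} I & X \\ 0 & Z \end{pmatrix}\begin{pmatrix} W-XZ^{-1}Y & 0 \\ Z^{-1}Y & I \end{pmatrix}$, but this is a cosmetic variation of the same idea.
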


\begin{proof}
Simply note the factorization
\[\begin{pmatrix} W & X \\ Y & Z \end{pmatrix}=\begin{pmatrix} I & X \\
0 & Z \end{pmatrix}\begin{pmatrix} W-XZ^{-1}Y & 0 \\ \
  Z^{-1}Y & I \end{pmatrix},\]
and take the determinant of both sides.
\end{proof}

\begin{thm}\label{Thm 2}
Let $\{ x_{0}, x_{1}, \ldots, x_{m} \}$, $m \geq 1$, be a subset of the Hamming
cube $H_{n}$. If the set of vectors $\{ x_{1}, x_{2}, \dots, x_{m} \}$ is linearly independent, then
\begin{align*}
\det (D) & = (-1)^{m}2^{m-1}\det(G)\langle G^{-1}u,u\rangle \\
         & = (-1)^{m}2^{m-1}V^{2}\langle G^{-1}u,u\rangle,
\end{align*}
where $V$ is the volume of the $m$-dimensional parallepiped with sides
$x_{1}, x_{2}, \dots, x_{m}$. In particular, $\det (D) \not= 0$.
\end{thm}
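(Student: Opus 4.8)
The plan is to start from Lemma \ref{Lemma 1}, which already reduces the problem to evaluating the bordered determinant $\det\begin{pmatrix} 0 & u^{T} \\ u & G \end{pmatrix}$, and then to bring in the linear-independence hypothesis via Gram's criterion. Since $\{ x_{1}, \ldots, x_{m} \}$ is linearly independent, the Gram matrix $G = BB^{T}$ is positive definite, and in particular invertible. This is precisely the hypothesis needed to apply Lemma \ref{Lemma 2}, with the lower-right block playing the role of the invertible matrix $Z$.

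First I would apply Lemma \ref{Lemma 2} to the block matrix with $W = 0$ (the $1 \times 1$ zero block), $X = u^{T}$, $Y = u$, and $Z = G$. The Schur-complement formula then gives $\det\begin{pmatrix} 0 & u^{T} \\ u & G \end{pmatrix} = \det(G)\det(0 - u^{T}G^{-1}u) = -\det(G)\langle G^{-1}u, u\rangle$, where the scalar Schur complement $u^{T}G^{-1}u = \langle G^{-1}u, u\rangle$ is computed as a $1 \times 1$ determinant and contributes the minus sign. Substituting this into Lemma \ref{Lemma 1} and combining that extra sign with the prefactor $(-1)^{m-1}$ yields $\det(D) = (-1)^{m}2^{m-1}\det(G)\langle G^{-1}u, u\rangle$, which is the first claimed identity. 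The second form follows immediately from the volume identity $V^{2} = \det G(x_{1}, \ldots, x_{m}) = \det(G)$ recalled in Section \ref{Sec 1}.

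It remains to verify the strict inequality $\det(D) \neq 0$. Because $G$ is positive definite, so is $G^{-1}$; hence $\det(G) > 0$ and $\langle G^{-1}u, u\rangle \geq 0$, with equality precisely when $u = \mathbf{0}$. The final step is therefore to rule out $u = \mathbf{0}$. Here I would note that the $i$th coordinate of $u$ equals $x_{i} \cdot x_{i} = \|x_{i}\|_{1}$, the number of nonzero entries of $x_{i}$, so $u = \mathbf{0}$ would force every $x_{i}$ to be the zero vector. But the zero vector can never belong to a linearly independent set, so each $x_{i} \neq \mathbf{0}$ and $u$ in fact has strictly positive coordinates. Consequently $\langle G^{-1}u, u\rangle > 0$, and the product $(-1)^{m}2^{m-1}\det(G)\langle G^{-1}u, u\rangle$ is nonzero.

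I do not expect a genuine obstacle in this argument; it is primarily careful bookkeeping. The two points deserving attention are tracking the sign produced by the scalar Schur complement, ensuring it converts the $(-1)^{m-1}$ of Lemma \ref{Lemma 1} into the stated $(-1)^{m}$, and the short observation that linear independence excludes the zero vector and hence guarantees $u \neq \mathbf{0}$. This last remark is exactly what promotes the nonnegativity of $\langle G^{-1}u, u\rangle$ to strict positivity and thereby secures $\det(D) \neq 0$.
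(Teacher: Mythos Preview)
Your proof is correct and follows essentially the same approach as the paper: both combine Lemma~\ref{Lemma 1} with the Schur-complement formula of Lemma~\ref{Lemma 2} (taking $W=0$, $X=u^{T}$, $Y=u$, $Z=G$), and then invoke positive definiteness of $G$ to conclude $\det(D)\neq 0$. Your argument is in fact slightly more explicit than the paper's, which simply asserts $\langle G^{-1}u,u\rangle\neq 0$ without spelling out why $u\neq\mathbf{0}$.
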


\begin{proof}
The Gram matrix $G$ is invertible (and thus has a non-zero determinant)
because the vectors $x_{1}, x_{2}, \dots, x_{m}$ are linearly independent.
Hence the formulas for $\det (D)$ follow immediately from
Lemmas \ref{Lemma 1} and \ref{Lemma 2}. Moreover, because
the Gram matrix $G$ is positive definite, $\langle G^{-1}u,u\rangle \not= 0$.
Consequently, $\det (D) \not= 0$.
\end{proof}

The following corollary holds for any set of vectors $\{ x_{0}, x_{1}, \ldots, x_{m} \} \subseteq H_{n}$.
In particular, it may be the case that $x_{0} \not= \mathbf{0}$.

\begin{cor}\label{affine}
Let $\{ x_{0}, x_{1}, \ldots, x_{m} \}$, $m \geq 1$, be a subset of the Hamming
cube $H_{n}$. Then $\det (D) \not= 0$ if and
only if the set of vectors $\{ x_{0}, x_{1}, \ldots, x_{m} \}$ is affinely independent.
\end{cor}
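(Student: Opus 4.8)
The plan is to deduce the equivalence directly from the dichotomy already established in Theorems \ref{Thm 1} and \ref{Thm 2}, using translation invariance to handle the fact that those theorems were proved under the standing normalization $x_{0} = \mathbf{0}$. First I would invoke the standard linear-algebra fact that a finite set $\{ x_{0}, x_{1}, \ldots, x_{m} \} \subset \mathbb{R}^{n}$ is affinely independent if and only if the differences $\{ x_{1} - x_{0}, \ldots, x_{m} - x_{0} \}$ form a linearly independent set. This is the bridge that converts the hypothesis of the corollary into the hypotheses appearing in the two theorems.

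Next I would use the translation invariance of the $\ell_{1}$ metric recorded at the beginning of this section. Applying the isometric isomorphism $\Phi(x) = x - x_{0}$ sends the configuration to $\{ \mathbf{0}, x_{1} - x_{0}, \ldots, x_{m} - x_{0} \}$ and leaves the distance matrix $D$ unchanged. Hence there is no loss of generality in assuming $x_{0} = \mathbf{0}$, in which case the vectors $\{ x_{1}, \ldots, x_{m} \}$ that figure in Theorems \ref{Thm 1} and \ref{Thm 2} are exactly the difference vectors $\{ x_{1} - x_{0}, \ldots, x_{m} - x_{0} \}$ of the original set.

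Finally I would combine the two theorems. If $\{ x_{0}, x_{1}, \ldots, x_{m} \}$ is affinely dependent, then $\{ x_{1} - x_{0}, \ldots, x_{m} - x_{0} \}$ is linearly dependent, so Theorem \ref{Thm 1} gives $\det (D) = 0$. Conversely, if $\{ x_{0}, x_{1}, \ldots, x_{m} \}$ is affinely independent, then these differences are linearly independent, so Theorem \ref{Thm 2} gives $\det (D) \neq 0$. These two implications together establish the stated equivalence.

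There is no genuinely hard step here; the only point requiring care, and the reason the corollary is deliberately phrased to permit $x_{0} \neq \mathbf{0}$, is the reduction to the normalized case. I would take care to verify explicitly that both relevant quantities are unaffected by the translation $\Phi$: the value of $\det (D)$, since $\Phi$ is an isometry and so preserves the distance matrix entrywise, and the property of affine independence, since affine independence is itself translation invariant. Once this is made precise, the conclusions of Theorems \ref{Thm 1} and \ref{Thm 2} transfer back to the arbitrary configuration and the proof is complete.
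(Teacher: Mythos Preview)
Your proposal is correct and follows exactly the paper's approach: the corollary is stated there as ``an immediate consequence of Theorems \ref{Thm 1} and \ref{Thm 2}.'' You have simply written out in detail the translation-invariance reduction to $x_{0}=\mathbf{0}$ and the passage from affine independence to linear independence of the differences, both of which the paper leaves implicit.
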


\begin{proof}
This is an immediate consequence of Theorems \ref{Thm 1} and \ref{Thm 2}.
\end{proof}

We now reduce to the result (\ref{twinkle}) of Graham and Winkler \cite{GW, GW2}
stated in Section \ref{Sec 1} by calculating $\langle G^{-1}u,u\rangle$ in the case $m = n$.

\begin{thm}\label{Thm 3}
Let $\{ x_{0}, x_{1}, \ldots, x_{n} \}$ be a subset of the Hamming
cube $H_{n}$. If the set of vectors $\{ x_{1}, x_{2}, \dots, x_{n} \}$ is linearly independent,
then $\langle G^{-1}u,u\rangle = n$.
\end{thm}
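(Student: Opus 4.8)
The plan is to exploit the one structural feature that is available precisely when $m = n$: the matrix $B$ is then square, and linear independence of $\{ x_{1}, \ldots, x_{n} \}$ upgrades "full row rank" to genuine invertibility of $B$. With that in hand the quadratic form $\langle G^{-1} u, u \rangle$ collapses almost immediately. The preliminary observation I would record first is that $u$ is itself of the form $B$ applied to a fixed vector. Indeed, since every coordinate $x_{i,k}$ lies in $\{ 0,1 \}$ we have $x_{i,k}^{2} = x_{i,k}$, and hence
\[
u_{i} = x_{i} \cdot x_{i} = \sum_{k=1}^{n} x_{i,k}^{2} = \sum_{k=1}^{n} x_{i,k}.
\]
Letting $\1 \in \mathbb{R}^{n}$ denote the all-ones column vector, this says exactly that $u = B\1$.

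The computation then proceeds by substituting $G = BB^{T}$ and using that the square matrix $B$ is invertible, so that $G^{-1} = (B^{T})^{-1} B^{-1}$:
\[
\langle G^{-1} u, u \rangle = (B\1)^{T} (BB^{T})^{-1} (B\1) = \1^{T} B^{T} (B^{T})^{-1} B^{-1} B\, \1 = \1^{T} \1 = n.
\]
The two cancellations $B^{T}(B^{T})^{-1} = I$ and $B^{-1}B = I$ are where invertibility does all the work, and what remains is simply $\1^{T}\1 = n$. Combined with Theorem \ref{Thm 2}, this recovers the Graham--Winkler formula (\ref{twinkle}), since $(-1)^{n} 2^{n-1} V^{2} \cdot n = (-1)^{n} n 2^{n-1} V^{2}$.

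The main (indeed the only) obstacle is justifying that $B$ is invertible rather than merely of full rank, and here I would emphasise that $m = n$ is indispensable: for $n$ vectors in $\mathbb{R}^{n}$, linear independence is equivalent to invertibility of the square matrix $B$, which is exactly the stated hypothesis. When $m < n$ the matrix $B$ is rectangular, the expressions $B^{T}(B^{T})^{-1}$ and $B^{-1}B$ are meaningless, and the telescoping fails, so $\langle G^{-1} u, u \rangle$ need no longer equal $m$. This is precisely the significant difference between the cases $m < n$ and $m = n$ signalled in Section \ref{Sec 1}, and I would flag it explicitly at the point where invertibility is invoked.
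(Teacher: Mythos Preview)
Your proof is correct and is essentially identical to the paper's own argument: both observe that $u = B\1$ (using $x_{i,k}^{2} = x_{i,k}$), invoke invertibility of the square matrix $B$ when $m = n$, and then cancel $B^{T}(B^{T})^{-1}$ and $B^{-1}B$ to reduce $\langle G^{-1}u,u\rangle$ to $\langle \1,\1\rangle = n$. The only cosmetic difference is that the paper writes the computation in inner-product notation while you use matrix-transpose notation.
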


\begin{proof}
Let $\1$ denote the vector in $\Rb^{n}$ all of whose coordinates are $1$. It is easy to verify that
$B\1=u$. In this setting ($m=n$) we have the advantage that the matrix $B$ is invertible and hence $B^{-1}u=\1$. Then we simply calculate that
	\begin{align*}
	\langle G^{-1}u,u\rangle&=\langle(BB^{T})^{-1}u,u\rangle
	\\&=\langle(B^{-1})^{T}B^{-1}u,u\rangle
	\\&=\langle B^{-1}u,B^{-1}u\rangle
	\\&=\langle\1,\1\rangle
	\\&=n.
	\end{align*}
\end{proof}

\begin{rem}\label{Rem 1}
Let $\{ x_{0}, x_{1}, \ldots, x_{m} \}$, $m \geq 1$, be a subset of the Hamming
cube $H_{n}$. It is worth noting that if the set of vectors $\{ x_{1}, x_{2}, \dots, x_{m} \}$ is linearly independent and $m < n$,
then it need not be the case that $\langle G^{-1}u,u\rangle = m$. If, for instance, we consider two linearly independent
vectors $x_{1}, x_{2} \in \{ 0,1 \}^{n}$, then direct calculations show that
\begin{align}\label{eqn3}
\langle G^{-1}u,u\rangle = \frac{(x_{1} \cdot x_{1})(x_{2} \cdot x_{2})(x_{1} - x_{2}) \cdot (x_{1} - x_{2})}{\det (G)}.
\end{align}
If $n = 2$ the quantity on the right side of (\ref{eqn3}) is easily seen to be equal to $2$ and this is consistent
with Theorem \ref{Thm 3}. But, in general, if $n > 2$ the quantity on the right side of (\ref{eqn3}) is not even
constant. To see that this is so it suffices to consider the case $n = 3$. If $x_{1} = (1,1,1)$ and $x_{2}= (1,1,0)$,
then $\langle G^{-1}u,u\rangle = 3$. On the other hand, if $x_{1} = (1,0,1)$ and $x_{2}= (1,1,0)$, then
$\langle G^{-1}u,u\rangle = 8/3$. In general, the calculation of $\langle G^{-1}u,u\rangle$ is more nuanced in the case $m < n$
because $B$ is no longer invertible.
\end{rem}

The following corollary holds for any set of vectors $\{ x_{0}, x_{1}, \ldots, x_{n} \} \subseteq H_{n}$.
In particular, it may be the case that $x_{0} \not= \mathbf{0}$.

\begin{cor}
Let $\{ x_{0}, x_{1}, \ldots, x_{n} \}$ be a subset of the Hamming
cube $H_{n}$. If the set of vectors $\{ x_{0}, x_{1}, \dots, x_{n} \}$ is affinely independent, then
$\det (D) = (-1)^{n}n2^{n-1}V^{2}$,
where $V$ is the volume of the $n$-dimensional parallepiped with sides
$x_{j} - x_{0}$, $1 \leq j \leq n$.
\end{cor}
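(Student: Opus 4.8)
The plan is to deduce the corollary directly from Theorem \ref{Thm 2} and Theorem \ref{Thm 3}, after first reducing to the normalized situation $x_0 = \mathbf{0}$. I would begin by invoking the translation invariance of the $\ell_1$ metric recorded at the start of Section \ref{Sec 2}: replacing each $x_i$ by $x_i - x_0$ alters neither the distance matrix $D$ nor the volume $V$ of the parallelepiped with sides $x_j - x_0$, $1 \leq j \leq n$. Hence there is no loss of generality in assuming $x_0 = \mathbf{0}$, in which case $x_j - x_0 = x_j$ for each $j$ and $V$ is precisely the volume of the parallelepiped with sides $x_1, \ldots, x_n$. This is the normalization under which the machinery of Section \ref{Sec 2} was developed.

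Next I would translate the affine-independence hypothesis into the linear-independence hypothesis required by the earlier results. A finite set of points is affinely independent exactly when the displacement vectors from any one of its members are linearly independent; thus affine independence of $\{ x_0, x_1, \ldots, x_n \}$ is equivalent to linear independence of $\{ x_1 - x_0, \ldots, x_n - x_0 \}$, which under our normalization is simply the set $\{ x_1, \ldots, x_n \}$. With this linear independence established, Theorem \ref{Thm 2} applies in the case $m = n$ and yields
\[
\det(D) = (-1)^{n} 2^{n-1} V^{2} \langle G^{-1} u, u \rangle.
\]

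Finally I would apply Theorem \ref{Thm 3}, which asserts precisely that $\langle G^{-1} u, u \rangle = n$ in this $m = n$ setting, and substitute to obtain $\det(D) = (-1)^{n} n 2^{n-1} V^{2}$. I do not expect any genuine obstacle here: the substantive work has already been done in Theorems \ref{Thm 2} and \ref{Thm 3}, so the corollary is essentially a matter of combining them. The only point requiring care is the bookkeeping that permits us to pass freely between the affinely independent set $\{ x_0, \ldots, x_n \}$ (with $x_0$ possibly nonzero, as the corollary explicitly allows) and the linearly independent set $\{ x_1, \ldots, x_n \}$ to which the results of Section \ref{Sec 2} directly apply, together with the observation that both $D$ and $V$ are unaffected by the translation used to make this passage.
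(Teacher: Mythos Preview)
Your proposal is correct and follows exactly the paper's approach: the paper's own proof simply reads ``Immediate from Theorems \ref{Thm 2} and \ref{Thm 3},'' and you have spelled out that immediacy in detail, including the translation-invariance reduction to $x_0 = \mathbf{0}$ and the passage from affine to linear independence that justify invoking those theorems.
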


\begin{proof}
Immediate from Theorems \ref{Thm 2} and \ref{Thm 3}.
\end{proof}

In the event that an affinely independent set $\{ x_{0}, x_{1}, \dots, x_{n} \} \subset H_{n}$ is an embedded
$n + 1$ point unweighted tree we have that $\det (D) = (-1)^{n}n2^{n-1}$ by the celebrated formula of Graham and Pollak \cite{GP}.
It is worth noting that there exist affinely independent sets $\{ x_{0}, x_{1}, \dots, x_{n} \} \subset H_{n}$ that
satisfy this same formula but which are not embedded trees. For instance, if
we set $x_{0} = \mathbf{0}$, $x_{1} = (1,0,0)$, $x_{2} = (0,1,0)$ and $x_{3} = (1,1,1)$ in $H_{3}$, then it is easy to
verify that $\det(D) = -12$. However, $\{ x_{0}, x_{1}, x_{2}, x_{3} \}$ is certainly not an embedded tree in $H_{3}$.
  
\section{Applications to supremal negative type}\label{Sec 3}

The results of Section \ref{Sec 2} afford a new analysis of negative type properties of subsets of the
Hamming cube $H_{n}$. In particular, we develop a new proof of Murugan's \cite{MM} classification of
the subsets of $H_{n}$ that have strict $1$-negative type. In order to proceed we need to recall some
classical definitions and related theorems.

\begin{defn}
  Let $(X, d)$ be a metric space and suppose that $p \geq 0$. Then:
  \begin{enumerate}
  \item[(a)] $(X, d)$ has \textit{$p$-negative type} iff for each finite subset $\{ x_{0}, \ldots, x_{m} \}$ of $X$
     and each choice of scalars $\xi_{0}, \ldots, \xi_{m}$ such that $\xi_{0} + \cdots + \xi_{m} = 0$, we have
    \begin{eqnarray}\label{neg type}
    \sum_{i,j=0}^{m} d(x_{i}, x_{j})^{p} \xi_{i} \xi_{j} & \le & 0.  
    \end{eqnarray}  
    
  \item[(b)] $(X, d)$ has \textit{strict $p$-negative type} iff  $(X, d)$ has \textit{$p$-negative type} and, moreover,
    each inequality (\ref{neg type}) is strict whenever $(\xi_{1}, \ldots, \xi_{n}) \not= \mathbf{0}$.

  \item[(c)] The \textit{supremal negative type} of $(X, d)$, denoted by $\wp_{(X,d)}$ or simply $\wp_{X}$ when the metric
    $d$ is clear, is defined to be the supremum of all $p \geq 0$ such that $(X, d)$ has $p$-negative type.
  \end{enumerate}
\end{defn}

Considerations of negative type arose classically in relation to fundamental isometric embedding problems.
For instance, Schoenberg famously determined that a metric space $(X, d)$ embeds isometrically into
a Hilbert space $H$ iff $(X, d)$ has $2$-negative type. Moreover, the range of the embedding will be an
affinely independent subset of $H$ iff $(X, d)$ has strict $2$-negative type. Schoenberg further determined that
if a metric space $(X, d)$ has $p$-negative type then it has $q$-negative type for all $q \leq p$
and that $\wp_{(X, d)}$ is a maximum whenever it is finite. These results appear in Schoenberg \cite{IS1, IS2, IS3}.

Subsets of $\ell_{1}$ are well-known to have to have $1$-negative type. (See, for instance, Wells and Williams
\cite[Theorem 4.10]{WW}.) Explicitly determining subsets of $\ell_{1}$
that have strict $1$-negative type is a significantly more challenging (and largely open) problem. A nice
result in this direction is the following theorem of Murugan \cite{MM}: A subset $X = \{ x_{0}, x_{1}, \ldots, x_{m}\}$ of the Hamming
cube $H_{n}$ has supremal negative type $\wp_{X} = 1$ iff the set of vectors
$\{ x_{0}, x_{1}, \ldots, x_{m}\}$ is affinely dependent. Equivalently, since the supremal negative type of a finite
metric space cannot be strict by Li and Weston \cite{LW}, it follows that $\wp_{X} > 1$ iff the set of vectors
$\{ x_{0}, x_{1}, \ldots, x_{m}\}$ is affinely independent. Stated this way, we see that there is a strong correlation
between Murugan's theorem and Corollary \ref{affine}.

For any metric space $(X, d)$ and any $\alpha \in (0, 1)$, the so-called \textit{metric transform} $d^{\alpha}$ is also
a metric on $X$ and it is easy to verify that $\wp_{(X, d^\alpha)}  = \alpha^{-1}\wp_{(X,d)}$. Now, for $p \geq 1$, let $d_{p}$ denote
the $\ell_{p}$-metric on $\mathbb{R}^{n}$. For any $x, y \in \{ 0, 1\}^{n}$ it is plain to see that $d_{p}(x, y) = d_{1}(x, y)^{1/p}$.
So, for any $X \subseteq  \{ 0, 1\}^{n}$, it follows that $\wp_{(X,d_{p})} = p \wp_{(X,d_{1})}$. As $\wp_{(X, d_{1})} \geq 1$, we deduce
that $\wp_{(X, d_{p})} \geq p$ for all $p \geq 1$. It is also worth noting that in the case $p = \infty$,
$d_{\infty}$ is necessarily the discrete metric on $X$, and so $\wp_{(X, d_{\infty})} = \infty$. 

In general, given a metric space $(X, d)$, explicitly calculating or even estimating $\wp_{(X, d)}$ is a difficult
exercise in combinatorial optimization. In the case of a finite metric space $(X, d) = (\{ x_{0}, \ldots x_{m} \}, d)$,
S{\'a}nchez \cite{SS} gave an explicit formula for $\wp_{(X,d)}$ in terms of the underlying $p$-distance matrices
$D_{p} = (d(x_{i}, x_{j})^{p})_{i, j = 0}^{m}$, $p \geq 0$. Namely,
\begin{equation}\label{eqn4}
\wp_{(X,d)} = \min \{ p \,:\, \text{$\det(D_p) = 0$ or $\ip<D_p^{-1}\1,\1> = 0$} \}
\end{equation}
where $\1$ is the column vector all of whose coordinates are $1$. In particular,
this shows that if $\det(D_p) = 0$ then $\wp_{(X,d)} \le p$.

S{\'a}nchez' proof of (\ref{eqn4}) depends upon the following theorem.

\begin{thm}[S{\'a}nchez \cite{SS}]\label{Thm 3.5}
  Let $|X| > 1$ and $(X, d)$ be a finite metric space of $p$-negative type. Then $(X, d)$
  has strict $p$-negative type iff
  \begin{enumerate}
  \item $\det (D_{p}) \not= 0$, and

  \item $\ip<D_p^{-1}\1,\1> \not= 0$.
  \end{enumerate}  
\end{thm}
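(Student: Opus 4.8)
The plan is to recast both $p$-negative type and its strict version as definiteness statements for a single quadratic form, and then read off the two conditions from elementary linear algebra. Write $A = D_{p}$, let $Q(\xi) = \ip<A\xi,\xi>$ be the associated quadratic form on $\Rb^{m+1}$, and let $V = \{\xi : \ip<\xi,\1> = 0\} = \1^{\perp}$ be the hyperplane appearing in the definition of negative type. By definition, $(X,d)$ having $p$-negative type means precisely that $Q|_{V}$ is negative semidefinite, and strict $p$-negative type means that $Q|_{V}$ is negative definite. Thus the whole theorem reduces to the purely algebraic claim: given that $Q|_{V} \le 0$, the restriction $Q|_{V}$ is negative definite if and only if $A$ is invertible and $\ip<A^{-1}\1,\1> \ne 0$. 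First I would record the standing signature fact: since $Q(\1) = \sum_{i,j} d(x_{i},x_{j})^{p} > 0$ (using $|X| > 1$), the symmetric matrix $A$ has at least one positive eigenvalue, while $Q|_{V} \le 0$ forces at most one (a subspace on which $Q$ is positive definite meets $V$ only in $\{0\}$, hence has dimension at most one); so $A$ has exactly one positive eigenvalue.

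For the direction ($\Leftarrow$), suppose $A$ is invertible and $\ip<A^{-1}\1,\1> \ne 0$, and suppose for contradiction that $Q|_{V}$ is not negative definite. Then there is a nonzero $\xi \in V$ with $Q(\xi) = 0$. Since $Q \le 0$ throughout $V$, such a $\xi$ maximises $Q|_{V}$, so the first-order condition $\ip<A\xi,\eta> = 0$ holds for every $\eta \in V$; that is, $A\xi \in V^{\perp} = \Rb\1$, say $A\xi = \lambda\1$. If $\lambda = 0$ then $A\xi = 0$ with $\xi \ne 0$, contradicting invertibility. If $\lambda \ne 0$ then $\xi = \lambda A^{-1}\1$, and $0 = \ip<\xi,\1> = \lambda\ip<A^{-1}\1,\1>$ forces $\ip<A^{-1}\1,\1> = 0$, against the hypothesis. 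Hence $Q|_{V}$ is negative definite.

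For the direction ($\Rightarrow$), suppose $Q|_{V}$ is negative definite. Then $V$ is an $m$-dimensional subspace on which $Q$ is negative definite, so $A$ has at least $m$ negative eigenvalues; combined with the positive eigenvalue from the signature fact this accounts for all $m+1$ eigenvalues, leaving none equal to zero, so $A$ is invertible. Finally, set $\eta = A^{-1}\1$, which is nonzero, and compute $Q(\eta) = \ip<A\eta,\eta> = \ip<\1,\eta> = \ip<A^{-1}\1,\1>$. Were this inner product zero, $\eta$ would be a nonzero vector of $V$ with $Q(\eta) = 0$, contradicting negative definiteness; hence $\ip<A^{-1}\1,\1> \ne 0$, which completes the argument.

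I expect the main obstacle to be the ($\Leftarrow$) direction, specifically justifying the first-order condition that a nonzero null vector of $Q|_{V}$ must satisfy $A\xi \in \Rb\1$: one has to argue from the fact that $Q|_{V}$ attains its maximum value $0$ at $\xi$, so that the \emph{whole} linear functional $\eta \mapsto \ip<A\xi,\eta>$ vanishes on $V$, not merely its value along a single line. Once that is in hand, the case split on whether $A\xi$ is zero or a nonzero multiple of $\1$ cleanly separates the roles of conditions (1) and (2), and the converse direction is a short eigenvalue count together with the observation that $A^{-1}\1$ is the natural candidate degenerate direction. It is also worth noting that the two conditions can be packaged as a single bordered determinant, since Lemma \ref{Lemma 2} gives $\det\begin{pmatrix} A & \1 \\ \1^{T} & 0 \end{pmatrix} = -\det(A)\ip<A^{-1}\1,\1>$ when $A$ is invertible, although the direct approach above avoids needing this.
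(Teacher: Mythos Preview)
The paper does not prove Theorem~\ref{Thm 3.5}; it is quoted from S\'anchez \cite{SS} and used as a black box, so there is no in-paper argument to compare against. Your proof, however, is correct and self-contained: the signature observation (exactly one positive eigenvalue under the standing $p$-negative type hypothesis) is sound, the first-order argument for the $(\Leftarrow)$ direction is valid (the expansion $Q(\xi+t\eta)\le Q(\xi)$ for all $t$ forces $\ip<A\xi,\eta>=0$ for every $\eta\in V$, exactly as you anticipate), and the $(\Rightarrow)$ direction via the eigenvalue count and the test vector $A^{-1}\1$ is clean. The only cosmetic point is that the ``main obstacle'' you flag is not really an obstacle at all---it is the standard Lagrange/stationarity step for a semidefinite form attaining its extreme value, and your one-line justification suffices.
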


Now consider a set $X = \{ x_{0}, \ldots x_{m} \} \subseteq  \{ 0, 1\}^{n}$. For $p \geq 1$, let $D^{(p)}$ denote the $1$-distance
matrix for $(X, d_{p})$. In other words, $D^{(p)} = ( \| x_{i} - x_{j} \|_{p})_{i,j = 0}^{m}$ where $\| \cdot \|_{p}$ denotes
the $\ell_{p}$-norm on $\mathbb{R}^{n}$. As per our observations above, we have
$$D_{p}^{(p)} = (d_{p}(x_{i}, x_{j})^{p})_{i,j = 0}^{m} = (d_{1}(x_{i}, x_{j}))_{i,j = 0}^{m} = D_{1}^{(1)}$$
for all $p \geq 1$. Notice that $D_{1}^{(1)}$ is just $D$ according to the notation of Section \ref{Sec 2}. Hence,
by applying Theorem \ref{Thm 1}, we see that if the set $X$ is affinely dependent, then
$\det (D_{p}^{(p)}) = \det (D_{1}^{(1)}) = 0$ for all $p \geq 1$. So in this case we deduce that $\wp_{(X, d_{p})} = p$
for all $p \geq 1$. In particular, by applying Theorem \ref{Thm 3.5}, it follows that $(X, d_{p})$ does not have
strict $p$-negative type for any $p \geq 1$. Specializing to the case $p = 1$ provides a new proof of one implication
of Murugan's theorem. To establish the converse implication we need to develop two additional results. The first is a
variant of Lemma \ref{Lemma 1}.

\begin{thm}\label{Thm 4}
	Let $\{ x_{0}, x_{1}, \ldots, x_{m} \}$, $m \geq 1$, be a subset of the Hamming
	cube $H_{n}$. Then
	\[ \det\begin{pmatrix} 0 & \1^{T} \\ \1 & D \end{pmatrix}=(-1)^{m-1}2^{m}\det(G),
	\]
        where $D = D_{1}^{(1)} = (d_{1}(x_{i}, x_{j}))_{i,j = 0}^{m}$.
\end{thm}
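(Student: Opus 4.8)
The plan is to mimic the bordered-matrix manipulation from the proof of Lemma~\ref{Lemma 1}, but now applied to the larger $(m+2)\times(m+2)$ matrix
\[
M = \begin{pmatrix} 0 & \1^{T} \\ \1 & D \end{pmatrix},
\]
where $\1$ is the all-ones column $(m+1)$-vector and $D$ is the $(m+1)\times(m+1)$ distance matrix. The key observation is that the first (bordering) row and column of $M$ are invariant under the same row and column operations that simplified $D$ in Lemma~\ref{Lemma 1}, so I can reuse that computation almost verbatim on the $D$-block.

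First I would label the rows and columns of $M$ by $0,1,\dots,m+1$, so that the $\1$-border occupies index $0$ and the $D$-block occupies indices $1$ through $m+1$ (with $D$ itself internally indexed $0,\dots,m$ as before). Then, exactly as in Lemma~\ref{Lemma 1}, I perform the operations $R_i \mapsto R_i - R_2$ and $C_j \mapsto C_j - C_2$ for the appropriate range of indices hitting the $D$-block (that is, subtracting the row and column of $M$ corresponding to the first point $x_0$). Because identity~(\ref{eqn1}) gives $d(x_i,x_j) = x_i\cdot x_i + x_j\cdot x_j - 2\,x_i\cdot x_j$ and $x_0 = \mathbf{0}$, the $D$-block transforms into
\[
\begin{pmatrix} 0 & u^{T} \\ u & -2G \end{pmatrix}
\]
just as before, while the bordering $\1$-entries, being constant, become $0$ in every position except the corner entries joining the border to the $x_0$-row and $x_0$-column, which remain $1$. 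I would track these border entries carefully, since that is where this computation genuinely differs from Lemma~\ref{Lemma 1}.

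After these operations the transformed matrix has a sparse bordered shape: a $2\times 2$ block of border-and-corner entries, adjacent to the $\begin{pmatrix} u^T \\ -2G\end{pmatrix}$ data. The plan is then to expand the determinant along the rows and columns that have been cleared to zeros, reducing $\det(M)$ to a determinant built from $\1$, $u$, and $-2G$. I expect this collapses to something proportional to $\det(-2G) = (-2)^m \det(G)$, with the sign and the extra factor of $2$ in the claimed answer $(-1)^{m-1}2^{m}\det(G)$ emerging from the cofactor expansion and the placement of the surviving $1$'s. The main obstacle will be bookkeeping: getting the signs and the exact power of $2$ right after the expansion, and in particular confirming that the $u$ and $u^T$ entries contribute nothing to the final value (so that the answer depends only on $\det(G)$, unlike Theorem~\ref{Thm 2} where $\langle G^{-1}u,u\rangle$ survives). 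I would verify the power of $2$ and the sign by checking the small case $m=1$ directly against the formula before trusting the general cofactor count.
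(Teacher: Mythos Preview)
Your proposal is correct and follows essentially the same route as the paper's proof: the same row/column operations $R_i\mapsto R_i-R_2$, $C_j\mapsto C_j-C_2$ applied to the bordered matrix, yielding the same sparse form $\begin{pmatrix} 0 & 1 & \mathbf{0}^T \\ 1 & 0 & u^T \\ \mathbf{0} & u & -2G\end{pmatrix}$, followed by cofactor expansion along the first row to reach a block-triangular matrix whose determinant is $(-2)^m\det(G)$. Your expectation that the $u$'s drop out is exactly right --- after expansion they sit in the off-diagonal block of a block-triangular matrix --- so the only bookkeeping left is the single cofactor sign.
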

\begin{proof} Recall that $G$ denotes the Gram matrix $G(x_{1}, \ldots, x_{m}) = (x_{i} \cdot x_{j})_{i,j = 1}^{m}$.
Let $u=(x_{1}\cdot x_{1}, \ldots, x_{m}\cdot x_{m})^{T}$ and
$A=((x_{i}\cdot x_{i})+(x_{j}\cdot x_{j})-2(x_{i}\cdot x_{j}))_{i,j=1}^{m}$. Also, for $k \geq 1$, let
$\1_{k}$ denote the column vector in $\mathbb{R}^{k}$ all of whose entries are $1$. Using (\ref{eqn1}) we have that
\[ \begin{pmatrix} 0 & \1_{m+1}^{T} \\ \1_{m+1} & D \end{pmatrix}=\begin{pmatrix} 0 & 1 & \1_{m}^{T} \\ 1  & 0 & u^{T} \\ \1_{m} & u & A \end{pmatrix}. \]
We proceed by applying elementary row and column operations, as in the proof of Lemma \ref{Lemma 1}. To this end,
let $R_{i}$ and $C_{j}$ denote the $i$-th row and $j$-th column of the above matrix, respectively.
Then for $3\leq i,j\leq m+2$, replace $R_{i}$ by $R_{i}-R_{2}$ and then replace $C_{j}$ by $C_{j}-C_{2}$. This gives that
\[ \det\begin{pmatrix} 0 & \1_{m+1}^{T} \\ \1_{m+1} & D \end{pmatrix}=\det\begin{pmatrix} 0 & 1 & \1_{m}^{T} \\
1  & 0 & u^{T} \\ \1_{m} & u & A \end{pmatrix}=\det\begin{pmatrix} 0 & 1 & \mathbf{0}_{m}^{T} \\ 1  & 0 & u^{T} \\ \mathbf{0}_{m} & u & -2G \end{pmatrix}
	\]
	where here $\mathbf{0}_{m}$ denotes the zero vector in $\mathbb{R}^{m}$. Now, factorizing $-2$ from all of the entries and
        then replacing such a factor in the second row and the second column, we see that
	\[ \det\begin{pmatrix} 0 & \1_{m+1}^{T} \\ \1_{m+1} & D \end{pmatrix}=(-2)^{m}\det\begin{pmatrix} 0 & 1 & \mathbf{0}_{m}^{T} \\
        1  & 0 & u^{T} \\ \mathbf{0}_{m} & u & G \end{pmatrix}.
	\]
	But now we just expand along the top row twice to obtain
	\begin{align*}
	  \det\begin{pmatrix} 0 & \1_{m+1}^{T} \\ \1_{m+1} & D \end{pmatrix}&=(-2)^{m}\det\begin{pmatrix} 0 & 1 & \mathbf{0}_{m}^{T} \\
          1  & 0 & u^{T} \\ \mathbf{0}_{m} & u & G \end{pmatrix}
	\\&=-(-2)^{m}\det\begin{pmatrix} 1 & u^{T} \\ \mathbf{0}_{m} & G \end{pmatrix}
	\\&=-(-2)^{m}\det(G)
	\\&=(-1)^{m-1}2^{m}\det(G),
	\end{align*}
	as required.
\end{proof}

\begin{cor}\label{Cor M}
	Let $\{ x_{0}, x_{1}, \ldots, x_{m} \}$, $m \geq 1$, be a subset of the Hamming
	cube $H_{n}$. If the set of vectors $\{ x_{0}, x_{1}, \dots, x_{m} \}$ is affinely independent, then
	\[ \ip< D^{-1} \1,\1>=\frac{2}{\langle G^{-1}u,u\rangle} > 0,
        \]
         where $D = D_{1}^{(1)} = (d_{1}(x_{i}, x_{j}))_{i,j = 0}^{m}$.
\end{cor}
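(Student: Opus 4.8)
My plan is to evaluate the bordered determinant $\det\begin{pmatrix} 0 & \1^{T} \\ \1 & D \end{pmatrix}$ in two different ways and then equate the results. Theorem \ref{Thm 4} already supplies one evaluation, namely $(-1)^{m-1}2^{m}\det(G)$. The second evaluation is obtained by a Schur complement computation, which is exactly the content of Lemma \ref{Lemma 2}. The key preliminary observation is that affine independence of $\{ x_{0}, x_{1}, \ldots, x_{m} \}$ guarantees, via Corollary \ref{affine}, that $\det(D) \not= 0$, so that $D$ is invertible and Lemma \ref{Lemma 2} applies with $D$ in the role of the invertible block.

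First I would set up the block structure. Writing $M = \begin{pmatrix} 0 & \1^{T} \\ \1 & D \end{pmatrix}$ and taking $W = 0$ (a $1 \times 1$ block), $X = \1^{T}$, $Y = \1$, and $Z = D$ in Lemma \ref{Lemma 2}, I obtain
\[
\det(M) = \det(D)\det\bigl(0 - \1^{T}D^{-1}\1\bigr) = -\det(D)\,\langle D^{-1}\1,\1\rangle,
\]
since the Schur complement here is the $1 \times 1$ scalar $-\1^{T}D^{-1}\1 = -\langle D^{-1}\1,\1\rangle$. Equating this with the value from Theorem \ref{Thm 4} gives
\[
-\det(D)\,\langle D^{-1}\1,\1\rangle = (-1)^{m-1}2^{m}\det(G),
\]
so that $\langle D^{-1}\1,\1\rangle = -\,(-1)^{m-1}2^{m}\det(G)/\det(D)$.

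Next I would substitute the formula $\det(D) = (-1)^{m}2^{m-1}\det(G)\langle G^{-1}u,u\rangle$ from Theorem \ref{Thm 2}, which is legitimate because affine independence of $\{ x_{0}, \ldots, x_{m} \}$ together with our standing assumption $x_{0} = \mathbf{0}$ is equivalent to linear independence of $\{ x_{1}, \ldots, x_{m} \}$. After cancelling the common factor $\det(G) \not= 0$, the ratio of the powers of $2$ contributes a factor $2$ and the ratio of signs contributes a factor $-1$, leaving $\langle D^{-1}\1,\1\rangle = 2/\langle G^{-1}u,u\rangle$. Finally, positivity follows since $G$, and hence $G^{-1}$, is positive definite, while $u \not= \mathbf{0}$ (at least one $x_{i}$ is nonzero, so some $x_{i}\cdot x_{i} > 0$), giving $\langle G^{-1}u,u\rangle > 0$.

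The computation itself is routine; the only place demanding care is the bookkeeping of signs and powers of $2$ when combining Theorems \ref{Thm 2} and \ref{Thm 4}, and making sure the Schur complement is assembled with $D$ (not the scalar $0$) as the invertible block so that Lemma \ref{Lemma 2} is applicable. The minus sign produced by the $1 \times 1$ Schur complement is the step most likely to be mishandled, and I would double-check it by tracking the factor $(-1)^{m-1}/(-1)^{m} = -1$ explicitly.
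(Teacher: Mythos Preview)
Your proposal is correct and follows essentially the same approach as the paper: both apply Lemma~\ref{Lemma 2} with $W=0$, $X=\1^{T}$, $Y=\1$, $Z=D$ to the bordered matrix, combine the resulting Schur-complement identity with Theorems~\ref{Thm 2} and~\ref{Thm 4}, and deduce positivity from the positive definiteness of $G^{-1}$. Your explicit observation that $u \neq \mathbf{0}$ is a small extra care not spelled out in the paper.
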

\begin{proof} Recall that $G$ denotes the Gram matrix $G(x_{1}, \ldots, x_{m}) = (x_{i} \cdot x_{j})_{i,j = 1}^{m}$.
	Since $D$ is invertible, we may apply Lemma \ref{Lemma 2} with $W=0$, $X=\1^{T}$, $Y=\1$ and $Z=D$. This gives that
		\[ \det\begin{pmatrix} 0 & \1^{T} \\ \1 & D \end{pmatrix}=-\det(D)\ip< D^{-1} \1,\1>.
		\]
	So, by Theorems \ref{Thm 2} and \ref{Thm 4}, we see that
	\begin{align*}
	\langle D^{-1}\1,\1\rangle&=-\frac{\det\begin{pmatrix} 0 & \1^{T} \\ \1 & D \end{pmatrix}}{\det(D)}
	\\&=-\frac{(-1)^{m-1}2^{m}\det(G)}{(-1)^{m}2^{m-1}\det(G)\langle G^{-1}u,u\rangle}
	\\&=\frac{2}{\langle G^{-1}u,u\rangle}.
	\end{align*}
        Moreover, the Gram matrix $G$ is positive definite because the vectors $x_{1},\dots,x_{m}$ are linearly independent.
	Hence $G^{-1}$ is positive definite, and so $\langle G^{-1}u,u\rangle > 0$.
\end{proof}

Now if the set of vectors $X = \{ x_{0}, \ldots, x_{m} \} \subseteq H_{n}$ is affinely independent, then
$\det (D) \not= 0$ by Theorem \ref{Thm 2}. Moreover, $ \ip< D^{-1} \1,\1> > 0$ by Corollary \ref{Cor M}.
So we see that $\wp_{(X, d_{1})} > 1$ by (\ref{eqn4}) and Theorem \ref{Thm 3.5}.
In summary, we have obtained the following version of Murugan's theorem. 

\begin{thm}\label{Murugan}
  Let $X = \{x_0,x_1,\dots,x_m\}$, $m \geq 1$, be a subset of $\{ 0, 1 \}^{n}$. Then $\wp_{(X,d_p)} \ge p$ for all $p \geq 1$.
  Furthermore, the following conditions are equivalent.
\begin{enumerate}
  \item $X$ is affinely independent.
  \item $(X, d_1)$ has strict $1$-negative type.
  \item $\wp_{(X,d_p)} > p$ for some $p \ge 1$.
  \item $\wp_{(X,d_p)} > p$ for all $p \ge 1$.
\end{enumerate}
\end{thm}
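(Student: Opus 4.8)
The plan is to dispatch the opening inequality $\wp_{(X,d_p)} \ge p$ on its own and then establish the equivalence of (1)--(4) by running a single cyclic chain of implications, drawing entirely on machinery already assembled. For the inequality I would recall from the discussion preceding the statement that the metric transform identity gives $\wp_{(X,d_p)} = p\,\wp_{(X,d_1)}$ for any $X \subseteq \{0,1\}^n$, together with $\wp_{(X,d_1)} \ge 1$ (subsets of $\ell_1$ have $1$-negative type). Multiplying through yields $\wp_{(X,d_p)} \ge p$ for every $p \ge 1$, and this bound will also serve as the lower half of a squeeze in the last implication.

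For the equivalences I would prove the cycle (1) $\Rightarrow$ (2) $\Rightarrow$ (4) $\Rightarrow$ (3) $\Rightarrow$ (1). The step (1) $\Rightarrow$ (2) is where Section~\ref{Sec 2} does the real work: if $X$ is affinely independent then $\det(D) \ne 0$ by Theorem~\ref{Thm 2} and $\ip<D^{-1}\1,\1> > 0$ by Corollary~\ref{Cor M}, where $D = D_{1}^{(1)}$ is the $1$-distance matrix of $(X,d_1)$; since $(X,d_1)$ already has $1$-negative type, Theorem~\ref{Thm 3.5} applied with $p=1$ upgrades this to strict $1$-negative type. The step (2) $\Rightarrow$ (4) requires the most care: strict $1$-negative type, combined with the Li--Weston fact that the supremal negative type of a finite metric space is never attained strictly, forces $\wp_{(X,d_1)} > 1$, whence $\wp_{(X,d_p)} = p\,\wp_{(X,d_1)} > p$ for all $p \ge 1$. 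The step (4) $\Rightarrow$ (3) is immediate.

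To close the cycle I would argue (3) $\Rightarrow$ (1) by contraposition. If $X$ is affinely dependent, then Theorem~\ref{Thm 1} (equivalently Corollary~\ref{affine}) gives $\det(D) = 0$; since the $q$-distance matrix of $(X,d_p)$ evaluated at the exponent $q=p$ is precisely $D$, S\'anchez' formula~\eqref{eqn4} yields $\wp_{(X,d_p)} \le p$. Combined with $\wp_{(X,d_p)} \ge p$ from the first part, this squeezes $\wp_{(X,d_p)} = p$ for every $p \ge 1$, so neither (3) nor (4) can hold. I expect the only genuine subtlety to lie in (2) $\Rightarrow$ (4): one must invoke Li--Weston at exactly the right moment to convert ``strictness at $p=1$'' into the strict inequality $\wp_{(X,d_1)} > 1$, since without that input strictness at $1$ would remain consistent with $\wp_{(X,d_1)} = 1$. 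Everything else reduces to direct appeals to the determinant formulas of Section~\ref{Sec 2} and S\'anchez' characterisations.
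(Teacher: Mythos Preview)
Your proof is correct and follows essentially the same approach as the paper. The paper's argument is dispersed through the discussion preceding the theorem rather than written as a single block, but it uses the same ingredients: the metric-transform identity $\wp_{(X,d_p)} = p\,\wp_{(X,d_1)}$ for the baseline inequality, Theorem~\ref{Thm 2} and Corollary~\ref{Cor M} together with S\'anchez' Theorem~\ref{Thm 3.5} for (1)~$\Rightarrow$~(2), Theorem~\ref{Thm 1} plus \eqref{eqn4} for the contrapositive of (3)~$\Rightarrow$~(1), and the passage from strict $1$-negative type to $\wp_{(X,d_1)}>1$.

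The only cosmetic difference is in this last step: you invoke Li--Weston explicitly to argue that strictness at $p=1$ forces $\wp_{(X,d_1)}>1$, whereas the paper cites \eqref{eqn4} together with Theorem~\ref{Thm 3.5} (since $\det(D_1)\ne 0$ and $\ip<D_1^{-1}\1,\1>\ne 0$ mean $p=1$ is not in the minimising set, so $\wp_{(X,d_1)}\ne 1$). Both routes are valid and both facts are already on the table in Section~\ref{Sec 3}, so this is not a genuine divergence. Your cyclic organisation (1)~$\Rightarrow$~(2)~$\Rightarrow$~(4)~$\Rightarrow$~(3)~$\Rightarrow$~(1) is a clean way to package the argument.
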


\section{Affinely independent subsets of the Hamming cube}\label{Sec 4}

In the proof of Murugan's result given in the previous section, it was shown that if $\{x_{0},x_{1},\dots,x_{m}\}$ is an
affinely independent subset of $H_{n}$, then $\langle D^{-1}\1,\1\rangle > 0$
(where $D = D_{1}^{(1)} = (d_{1}(x_{i}, x_{j}))_{i,j = 0}^{m}$). However, there is actually more that can
be said in this setting. As stated earlier, results of Hjorth et al.\ \cite{Hj1} and Murugan \cite{MM} imply
that any unweighted metric tree $T$ on $n+1$ vertices embeds isometrically into $H_{n}$ as an affinely
independent set. For such embedded trees we may compute the precise value of $\langle D^{-1}\1,\1\rangle$.
In fact, just as Graham and Pollak \cite{GP} showed that $\det(D)$ does not depend upon the geometry of the particular tree,
we now show that this is also the case for the positive quantity $\langle D^{-1}\1,\1\rangle$.

\begin{thm}\label{tree}
Let $D$ be the distance matrix of an unweighted metric tree on $n+1$ vertices. Then
\[ \ip< D^{-1} \1,\1> = \frac{2}{n}. \]
\end{thm}

\begin{proof}
  Denote the vertices of the tree by $v_{0},\dots,v_{n}$ and for each $i$, $0\leq i\leq n$, let $\delta_{i}$ be the degree of the vertex $v_{i}$.
  Let $A=(a_{ij})_{i,j=0}^{n}$ be the adjacency matrix of the tree and write $D^{-1}=(d_{ij}^{\ast})_{i,j=0}^{n}$. By
  Graham and Lov{\'a}sz \cite[Lemma 1]{GL},
	\[ d_{ii}^{\ast}=\frac{(2-\delta_{i})(2-\delta_{i})}{2n}-\frac{\delta_{i}}{2}
	\]
	for all $0\leq i\leq n$, and
	\[d_{ij}^{\ast}=\frac{(2-\delta_{i})(2-\delta_{j})}{2n}+\frac{a_{ij}}{2}
	\]
	for all $0\leq i,j\leq n$ such that $i\neq j$.  We then compute that
	\begin{align*}
	\langle D^{-1}\1,\1\rangle&=\sum_{i,j=0}^{n}d_{ij}^{\ast}
	\\&=\sum_{\substack{i,j=0 \\ i\neq j}}^{n}d_{ij}^{\ast}+\sum_{i=0}^{n}d_{ii}^{\ast}
	\\&=\frac{1}{2n}\sum_{i,j=0}^{n}(2-\delta_{i})(2-\delta_{j})+\frac{1}{2}\bigg(\sum_{i,j=0}^{n}a_{ij}-\sum_{i=0}^{n}\delta_{i}\bigg)
	\\&=\frac{1}{2n}\sum_{i,j=0}^{n}(2-\delta_{i})(2-\delta_{j}).
	\end{align*}
	Now we use the fact that the sum of the degrees of the vertices of a tree on $n+1$ vertices is $2n$. Consequently,
	\begin{align*}
	\langle D^{-1}\1,\1\rangle&=\frac{1}{2n}\sum_{i,j=0}^{n}(2-\delta_{i})(2-\delta_{j})
	\\&=\frac{1}{2n}\bigg(\sum_{i=0}^{n}2-\delta_{i}\bigg)^{2}
	\\&=\frac{1}{2n}\bigg(2(n+1)-2n\bigg)^{2}
	\\&=\frac{2}{n},
	\end{align*}
	as required.
\end{proof}

The condition $\ip< D^{-1} \1,\1> = 2/n$ in Theorem \ref{tree} is not unique to embedded $n + 1$ point trees
in $H_{n}$. If, for example, we set $x_{0} = \mathbf{0}$, $x_{1} = (1,0,0)$, $x_{2} = (0,1,0)$ and $x_{3} = (1,1,1)$ in $H_{3}$, then it is
easy to verify that $\ip< D^{-1} \1,\1> = 2/3$. However, $\{ x_{0}, x_{1}, x_{2}, x_{3} \}$ is certainly not an embedded tree
in $H_{3}$.

As for what can be said about affinely independent subsets of $H_{n}$ that may not have the structure of an unweighted metric tree,
we have the following conjecture.

\begin{con*}
	Let $\{ x_{0}, x_{1}, \ldots, x_{m} \}$, $m \geq 1$, be a  subset of the Hamming cube $H_{n}$.
        If the set of vectors $\{ x_{0}, x_{1}, \ldots, x_{m} \}$ is affinely independent, then $\ip<D^{-1} \1,\1> \ge 2/n$.
\end{con*}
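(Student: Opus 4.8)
The plan is to transfer the problem onto the Gram data $(G,u)$ via Corollary \ref{Cor M} and then to read the inequality off from a single orthogonal-projection estimate. After translating so that $x_0 = \mathbf{0}$ — which alters neither $D$ nor $\langle D^{-1}\1,\1\rangle$, and under which affine independence of $\{x_0,\dots,x_m\}$ becomes linear independence of $\{x_1,\dots,x_m\}$ — Corollary \ref{Cor M} gives
\[
\langle D^{-1}\1,\1\rangle = \frac{2}{\langle G^{-1}u,u\rangle},
\]
with $\langle G^{-1}u,u\rangle > 0$. Hence the desired bound $\langle D^{-1}\1,\1\rangle \ge 2/n$ is \emph{equivalent} to $\langle G^{-1}u,u\rangle \le n$, and this is the statement I would prove for every linearly independent $\{x_1,\dots,x_m\}\subseteq\{0,1\}^n$.

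The decisive step is to exploit the $0/1$ structure of the coordinates. Since each entry satisfies $x_{i,k}^2 = x_{i,k}$, the $i$-th coordinate of $u$ is $x_i\cdot x_i = \sum_{k} x_{i,k}^2 = \sum_k x_{i,k} = (B\1)_i$, where here $\1\in\Rb^n$; that is, $u = B\1$. Substituting $G = BB^T$ then gives
\[
\langle G^{-1}u,u\rangle = u^T(BB^T)^{-1}u = \1^T B^T(BB^T)^{-1}B\,\1 = \langle P\1,\1\rangle,
\]
where $P := B^T(BB^T)^{-1}B$ is well defined because the rows of $B$ are linearly independent. A direct check shows $P^2 = P$ and $P^T = P$, so $P$ is the orthogonal projection of $\Rb^n$ onto $\operatorname{span}(x_1,\dots,x_m)$. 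Consequently
\[
\langle G^{-1}u,u\rangle = \langle P\1,\1\rangle = \langle P\1,P\1\rangle = \|P\1\|^2 \le \|\1\|^2 = n,
\]
using $P = P^2 = P^T$ together with the fact that an orthogonal projection is a contraction. This gives $\langle G^{-1}u,u\rangle\le n$, and hence the conjecture.

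In this approach the inequality itself is immediate once the projection identity is in hand, so the main obstacle is not analytic but conceptual: the one genuine ingredient is the observation $u = B\1$, which is special to the Hamming cube and fails for general vectors. I would expect the remaining interest to lie in the equality analysis. Indeed, $\langle G^{-1}u,u\rangle = n$ holds precisely when $\1\in\operatorname{span}(x_1,\dots,x_m)$; this occurs automatically when $m=n$ (recovering Theorem \ref{Thm 3}) and explains why embedded trees and the example following Theorem \ref{tree} all attain $\langle D^{-1}\1,\1\rangle = 2/n$. Characterising all affinely independent sets for which the value $2/n$ is attained is where I would expect the genuine difficulty to remain.
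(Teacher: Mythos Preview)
Your argument is correct, and in fact it resolves what the paper leaves as an open conjecture. The paper does not prove this statement at all: it only reports computer verification for $m,n\le 5$ and random testing, and explicitly remarks that ``no clear arithmetic reason for the conjectured lower bound has come to our attention.'' You have supplied exactly such a reason.

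The irony is that the paper already records both ingredients you use. The identity $u = B\1$ appears verbatim in the proof of Theorem~\ref{Thm 3}, and Corollary~\ref{Cor M} reduces the conjecture to $\langle G^{-1}u,u\rangle \le n$. What the authors missed is your one-line observation that, when $m<n$, the quantity $\1^T B^T(BB^T)^{-1}B\,\1$ is $\|P\1\|^2$ for the orthogonal projection $P$ onto the row space of $B$, whence the bound $\|P\1\|^2 \le \|\1\|^2 = n$ is immediate. Remark~\ref{Rem 1} even flags the case $m<n$ as ``more nuanced'' because $B$ is not invertible; your projection formulation shows that non-invertibility is no obstacle. Your equality analysis is also correct: $\langle G^{-1}u,u\rangle = n$ iff $\1 \in \operatorname{span}(x_1,\dots,x_m)$, and one checks directly that the non-tree example $\{\mathbf{0},(1,0,0),(0,1,0),(1,1,1)\}$ following Theorem~\ref{tree} satisfies this (indeed $\1 = x_3$ there).
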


We have confirmed this conjecture using a computer algebra package for all integers $m,n \leq 5$. In addition, tens of thousands
of random tests in larger dimensions has not provided any counterexamples to date. No clear arithmetic reason for the conjectured
lower bound has come to our attention. Notably, the denominators of the entries of $D^{-1}$ can be large compared to $n$. It is
fascinating to ask what, if any, geometric information is encoded by the quantity $\ip< D^{-1} \1,\1>$ in this context.

\section*{Acknowledgements}

The work of the second and third authors was supported by the Research Training Program of the
Department of Education and Training of the Australian Government.

\bibliographystyle{amsalpha}

\begin{thebibliography}{ABC}
\bibitem{CJ} R.\ Courant and F.\ John, \textsl{Introduction to Calculus and Analysis II/1},
             Springer-Verlag Berlin Heidelberg, 1999. viii $+$ 556 pp.

\bibitem{FG} F.\ R.\ Gantmacher, \textsl{The theory of matrices}, Vol.\ I, Chelsea, New York, 1959.
             x $+$ 374 pp.
             
\bibitem{GL} R.\ L.\ Graham and L.\ Lov{\'a}sz,
             \textit{Distance matrix polynomials of trees},
              Adv. Math. \textbf{29} (1978), pp. 60--88

\bibitem{GP} R.\ L.\ Graham and H.\ O.\ Pollak,
             \textit{On the addressing problem for loop switching},
             Bell System Tech. J. \textbf{50} (1971), 2495--2519.

\bibitem{GW} R.\ L.\ Graham and P.\ M.\ Winkler, \textit{On isometric embeddings of graphs},
             Trans. Amer. Math. Soc. \textbf{288} (1985), 527--536.
             
 \bibitem{GW2} R.\ L.\ Graham and P.\ M.\ Winkler, \textit{Corrigendum to ``On isometric embeddings of graphs''},
               Trans. Amer. Math. Soc. \textbf{294} (1986), 379.          

\bibitem{Hj1} P.\ Hjorth, P.\ Lison\v{e}k, S.\ Markvorsen and C.\ Thomassen,
              \textit{Finite metric spaces of strictly negative type},
              Linear Algebra Appl. \textbf{270} (1998), 255--273.
              
                        
\bibitem{LW} H.\ Li, A.\ Weston, \textit{Strict $p$-negative type of a metric space}, Positivity \textbf{14} (2010), 529--545.
                        
\bibitem{MM} M.\ K.\ Murugan,
             \textit{Supremal $p$-negative type of vertex transitive graphs},
             J. Math. Anal. Appl. \textbf{391} (2012), 376--381.
             
\bibitem{SS} S.\ S\'{a}nchez, \textit{On the supremal $p$-negative type of finite metric spaces},
	     J. Math. Anal. Appl. \textbf{389} (2012), 98--107.
			
\bibitem{IS1} I.\ J.\ Schoenberg, \textit{Remarks to Maurice Fr\'{e}chet's article ``Sur la d\'{e}finition
              axiomatique d'une classe d'espace distanci\'{e}s vectoriellement applicable sur l'espace de Hilbert''},
              Ann. of Math. \textbf{36} (1935), 724--732.

\bibitem{IS2} I.\ J.\ Schoenberg, \textit{On certain metric spaces arising from Euclidean spaces by a
              change of metric and their imbedding in Hilbert space}, Ann. Math. \textbf{38} (1937), 787--793.

\bibitem{IS3} I.\ J.\ Schoenberg, \textit{Metric spaces and positive definite functions}, Trans. Amer. Math. Soc. \textbf{44} (1938), 522--536.

\bibitem{WW} J.\ H.\ Wells and L.\ R.\ Williams, \textsl{Embeddings and extensions in analysis},
             Ergebnisse der Mathematik und ihrer Grenzgebiete, Band 84, Springer-Verlag, New York, Heidelberg, Berlin, 1975, vii + 108 pp.
  
\end{thebibliography}

\end{document}